\PassOptionsToPackage{table,xcdraw}{xcolor} % must precede \documentclass (SIAM cls loads xcolor)
\documentclass[final,hidelinks,onefignum,onetabnum]{siamart251216}

\usepackage{amsmath}
\usepackage{amsfonts}
\usepackage{graphicx}
\usepackage{float}
\usepackage[caption=false]{subfig}
\usepackage{epstopdf}
\usepackage{algorithm}
\usepackage{algpseudocode}
\usepackage{enumitem}
\usepackage{todonotes}
\usepackage{booktabs}
\usepackage{comment}
\algrenewcommand\Require{\State \textbf{Input:} }
\algrenewcommand\Ensure{\State \textbf{Output:} }
\ifpdf
  \DeclareGraphicsExtensions{.eps,.pdf,.png,.jpg}
\else
  \DeclareGraphicsExtensions{.eps}
\fi

\newsiamremark{remark}{Remark}
\newsiamremark{hypothesis}{Hypothesis}
\crefname{hypothesis}{Hypothesis}{Hypotheses}
\newsiamthm{claim}{Claim}
\newsiamremark{fact}{Fact}
\crefname{fact}{Fact}{Facts}

\usepackage{amsopn}

\usepackage{tikz}
\usepackage{pgfplots}
\pgfplotsset{compat=1.18}
\usetikzlibrary{positioning}
\usetikzlibrary{decorations.pathreplacing}
\usetikzlibrary{shapes.geometric}

\definecolor{cbf_orange}{RGB}{240, 80, 57}
\definecolor{cbf_salmon}{RGB}{229, 122, 119}
\definecolor{cbf_pink}{RGB}{238, 186, 180}
\definecolor{cbf_darkblue}{RGB}{31, 68, 156}
\definecolor{cbf_blue}{RGB}{61, 101, 165}
\definecolor{cbf_lightblue}{RGB}{124, 161, 204}
\definecolor{cbf_gray}{RGB}{168, 182, 204}

\tikzset{
    lineg_node/.style={
        circle,
        draw=black,
        text=black,
        minimum size=0.7cm,
        inner sep=0pt,
        font=\bfseries
    }
}

\headers{Spectral Optimization of Sparse Approximate Inverses}{F. Brarda, T. Xu, V. Kalantzis, R. P. Li, and Y. Xi}

\title{Factored Sparse Approximate Inverse Preconditioning via Spectral Optimization}

\author{Francesco Brarda\thanks{Department of Mathematics, Emory University, Atlanta, GA 30307 (\email{fbrarda@emory.edu}, \email{tianshi.xu@emory.edu}, \email{yuanzhe.xi@emory.edu}). Research is supported by NSF DMS-2338904 and LLNL-LDRD program under
Project No. 24-ERD-033.}
\and Tianshi Xu\footnotemark[1]
\and Vassilis Kalantzis\thanks{IBM Research, Thomas J. Watson Research Center, 1101 Kitchawan Rd, Yorktown Heights, NY 10598 (\email{vkal@ibm.com}).}
\and Rui Peng Li\thanks{Center for Applied Scientific Computing, Lawrence Livermore National Laboratory, P. O. Box 808, L-561, Livermore, CA 94551 (\email{li50@llnl.gov}). This work was performed under the auspices of the U.S. Department of Energy by Lawrence
Livermore National Laboratory under Contract DE-AC52-07NA27344 (LLNL-JRNL-870805) and was supported by the
LLNL-LDRD program under Project No. 24-ERD-033.}
\and Yuanzhe Xi\footnotemark[1]}

\ifpdf
\hypersetup{
pdftitle={Factorized Sparse Approximate Inverse Preconditioning via Spectral Optimization},
pdfauthor={F. Brarda, T. Xu, V. Kalantzis, R. P. Li, and Y. Xi}
}
\fi

\begin{document}
\raggedbottom

\maketitle

\begin{abstract}
In this paper, we study value selection for fixed-pattern factorized sparse approximate inverse
preconditioners.  Given a prescribed sparsity pattern for a factor \(G\), we
choose its admissible entries by optimizing spectral objectives of the
congruent preconditioned operator \(P(G)=GAG^\top\).  This differs from
classical sparse approximate inverse and FSAI constructions, which choose
entries through algebraic Frobenius-residual criteria.  For symmetric positive
definite systems, the spectral target is a cluster near \(+1\).  For symmetric
indefinite systems, where congruence preserves inertia, we introduce a bimodal
loss that drives positive and negative eigenvalues toward separated clusters
near \(+1\) and \(-1\), while penalizing eigenvalues near zero.
To make these objectives practical for large sparse matrices, we derive
projected Krylov support-gradients.  Lanczos runs provide both a stochastic
trace estimate of the spectral objective and a Ritz approximation to the exact gradient.  We implement the
resulting gradient through a detached Rayleigh surrogate: the Lanczos data are
computed without gradient tracking and held fixed, while the backward pass
differentiates only recomputed Rayleigh quotients with respect to the
admissible entries of \(G\).  This avoids differentiating through the Lanczos
recurrence while returning a matrix-free gradient on the prescribed support.
We also discuss a projected Kernel Polynomial Method rule as a finite
polynomial comparison.  Experiments on finite-element test problems show that spectral value selection improves fixed-support preconditioners, especially
for symmetric indefinite saddle-point systems.
We further demonstrate a graph neural network model for predicting admissible factor entries across related matrices.
\end{abstract}

\begin{keywords}
Factorized sparse approximate inverse, stochastic
Lanczos quadrature, automatic differentiation, indefinite linear systems, spectral loss, graph neural networks
\end{keywords}

\begin{MSCcodes}
65F08, 65F10, 68T07, 68T45
\end{MSCcodes}

\section{Introduction}
\label{sec:intro}

Discretizations of partial differential equations often lead to large sparse
linear systems
\[
    Ax=b,\qquad A\in\mathbb R^{n\times n}.
\]
For large-scale problems, especially in three dimensions, Krylov subspace
methods are often preferred over direct solvers because of memory and
complexity constraints.  Their performance, however, depends strongly on the
choice of preconditioner.  Designing preconditioners that are robust,
inexpensive to apply, and suitable for parallel hardware remains a central
problem in numerical linear algebra
\cite{benzi2002preconditioning,saad2003iterative}.

Approximate inverse preconditioners are attractive in this setting because
their application consists primarily of sparse matrix-vector products, rather
than sparse triangular solves.  This makes them naturally suited to parallel
and accelerator-oriented architectures
\cite{anzt2018incomplete,xu2022pargemslr}.  Their effectiveness depends not
only on the chosen sparsity pattern, but also on the numerical values assigned
to that pattern.  This paper studies that value-selection problem for
fixed-pattern factorized sparse approximate inverse preconditioners.

In the symmetric setting, we consider preconditioners of the form
\[
    M=GG^\top,
\]
where \(G\) is a sparse factor with prescribed support
\[
    \mathcal S_G\subseteq \{1,\ldots,n\}\times\{1,\ldots,n\}.
\]
The free variables are the admissible entries
\[
    \{G_{ij}:(i,j)\in\mathcal S_G\}.
\]
For spectral analysis we use the congruent preconditioned operator
\[
    P(G)=G^\top A G .
\]
When \(G\) is nonsingular, \(P(G)\) is similar to the left-preconditioned
operator \(MA=GG^\top A\), while preserving symmetry.

Classical sparse approximate inverse preconditioners usually choose nonzero entries
through algebraic residual minimization.  SPAI minimizes
\(\|I-AM\|_F^2\), which separates into support-restricted least-squares
problems for the columns of \(M\) \cite{chow1998approximate,grote1997parallel}.  For symmetric positive definite (SPD)
matrices, FSAI constructs a sparse factor \(G\) with
\(G G^\top\approx A^{-1}\); once the factor pattern is fixed, the entries of
\(G\) are computed from small local subproblems
\cite{kolotilina1993factorized,afn}.
These criteria are computationally attractive because they avoid
eigendecomposition and lead to tractable sparse subproblems.  Their limitation
is that they are algebraic surrogates: they do not directly optimize the
spectrum of the preconditioned operator.  This mismatch can be especially
important for symmetric indefinite and saddle-point systems, where a small
Frobenius-type residual need not prevent small-magnitude eigenvalues in the
preconditioned operator, and such eigenvalues can slow Krylov convergence.

A more direct criterion is to choose the factor values according to the
spectrum of \(P(G)\).  When \(A\) is SPD, the natural target is
\[
    P(G)=G^\top AG \approx I,
\]
so the preconditioned eigenvalues should cluster near \(+1\).  For symmetric
indefinite \(A\), this one-sided target is not attainable by a nonsingular
congruence.  By Sylvester's law of inertia, \(G^\top AG\) has the same numbers
of positive and negative eigenvalues as \(A\).  The appropriate spectral
target is therefore two-sided: positive and negative eigenvalue branches should
be separated from zero and clustered near \(+1\) and \(-1\), respectively.
This motivates spectral objectives, and in particular a bimodal loss, for
choosing the admissible entries on a fixed support.

The main computational challenge is that spectral objectives depend on global
eigenvalue information from \(P(G)\).  Dense eigendecomposition can provide
this information on small matrices, but it is not scalable for large sparse
preconditioner construction.  Moreover, since the
fixed-support problem only requires derivatives with respect to entries in
\(\mathcal S_G\), the relevant task is not to form the dense gradient, but to 
approximate its restriction to the prescribed support.

Matrix-free spectral estimators reduce the cost of objective evaluation but do
not by themselves solve the gradient problem.  For example, stochastic Lanczos
quadrature (SLQ) estimates spectral traces using only matrix-vector products
\cite{ubaru2017fast}.  However, differentiating the finite SLQ computation by
automatic differentiation requires backpropagation through the full Lanczos
recurrence, including the Krylov basis, recurrence coefficients, quadrature
nodes, and quadrature weights.  This backward path can be memory intensive and
numerically sensitive, particularly when Ritz values are clustered.  We
therefore use Lanczos data in a different way: we construct a projected
approximation to the spectral derivative matrix and evaluate the resulting
gradient only on the admissible support.

A further motivation is reuse across related matrices.  Matrix-specific
spectral optimization can produce high-quality factors, but repeating this
optimization from scratch for every new matrix may be expensive.  Recent work
on learned preconditioners suggests that matrix data and problem parameters can
help transfer preconditioning information across related instances
\cite{hausner2023neural,hausner2024learning,yang2025learning,
chen2024graph,luz2020learning,li2023learning,kopanicakova2024deeponet,
huang24,huang25}.  In the fixed-pattern sparse inverse setting, however, the
prediction target has a specific structure: the outputs are admissible factor
entries \(G_{ij}\), indexed by \((i,j)\in\mathcal S_G\), rather than solution
values or features attached only to the original matrix unknowns.  This makes
amortized prediction of sparse inverse factors a distinct learning problem.

The main contributions of the paper are as follows.
\begin{enumerate}[label=\arabic*)]
\item We formulate fixed-pattern factorized sparse approximate inverse
construction as a spectral value-selection problem.  Once the support
\(\mathcal S_G\) is prescribed, the admissible entries of \(G\) are chosen by
trace objectives of the congruent operator \(P(G)=GAG^\top\), rather than by
Frobenius-type residuals.  For symmetric indefinite systems, we introduce a
bimodal loss that respects inertia by driving eigenvalues away from zero and
toward separated positive and negative clusters.

\item We develop matrix-free projected support-gradient rules for optimizing
these spectral objectives.  The main SLQ-based rule uses Lanczos Ritz values,
Ritz vectors, and quadrature weights to approximate the dense spectral
derivative, then evaluates the resulting gradient only on
\(\mathcal S_G\).  We implement this rule through a detached Rayleigh
automatic-differentiation surrogate: the Lanczos data are computed without
gradient tracking and held fixed, while the backward pass differentiates only
recomputed Rayleigh quotients.  This avoids differentiating through the
Lanczos recurrence and returns the desired local support-gradient.  We also
describe a projected Kernel Polynomial Method rule as a finite-polynomial
comparison.

\item We provide numerical evidence that spectral value selection improves
fixed-support preconditioner construction.  On symmetric indefinite systems, spectral tuning produces effective factors on supports
where Frobenius-based tuning leaves nearly singular preconditioned operators.
The experiments also show that the detached Rayleigh support-gradient is
cheaper and better aligned with the dense spectral gradient than full
automatic differentiation through finite SLQ on the tested cases.  As an amortization demonstration, we test a graph neural network factor model for predicting admissible entries across related matrices.
\end{enumerate}

The remaining sections are organized as follows.
Section~\ref{sec:background} formulates fixed-pattern sparse inverse factors
and reviews the graph representation of the sparse matrix.  Section~\ref{sec:loss} defines the spectral objectives for choosing
the factor values.  Section~\ref{sec:projected_ad} develops projected Krylov
support-gradient rules for matrix-free spectral optimization.
Section~\ref{sec:experiments} reports the numerical experiments, including
fixed-support optimization, backward-rule ablations, and the amortized
factor-prediction demonstration.  Section~\ref{sec:conclusion} concludes the
paper.
\section{Background}
\label{sec:background}

This section formalizes the fixed-support sparse inverse setting used in the
paper.  We first recall the algebraic residual viewpoint behind SPAI and FSAI,
which provides the main baseline class.  We then represent the prescribed
factor entries as a vector of free variables and introduce the graph models used to index those entries.

For a sparsity pattern \(\mathcal S\), write
\[
    \mathbb S(\mathcal S)
    =
    \{X\in\mathbb R^{n\times n}: X_{ij}=0
      \text{ whenever } (i,j)\notin\mathcal S\}.
\]
Thus the prescribed factor constraint is \(G\in\mathbb S(\mathcal S_G)\).

Classical sparse approximate inverse methods construct a sparse matrix
\(M\in\mathbb S(\mathcal S_M)\) that approximates \(A^{-1}\).  A standard
SPAI formulation is
\begin{equation}
\label{eq:spai_frobenius}
\min_{M\in\mathbb S(\mathcal S_M)}
    \|I-AM\|_F^2
=
\sum_{j=1}^n
\min_{\operatorname{supp}(m_j)\subseteq \mathcal S_j}
    \|e_j-Am_j\|_2^2,
\end{equation}
where \(m_j\) is the \(j\)th column of \(M\) and
\[
    \mathcal S_j=\{i:(i,j)\in\mathcal S_M\}.
\]
Once the column sparsity patterns are fixed, this Frobenius objective decouples
into independent support-restricted least-squares problems
\cite{grote1997parallel}.

In the factored setting, a general sparse approximate inverse may be written as
a product of sparse factors.  The symmetric case studied in this paper uses the
tied form
\[
    M=GG^\top ,
\]
where \(G\) is the stored sparse factor.  This is an orientation convention:
the same symmetric preconditioner class is often written in FSAI notation as
\(M=F^\top F\), with \(F=G^\top\).  Under the convention used here, the
symmetric congruent operator associated with the left-preconditioned matrix
\(MA=GG^\top A\) is
\[
    P(G)=G^\top A G .
\]
Indeed, if \(G\) is nonsingular, then \(MA\) is similar to \(P(G)\).

For SPD matrices, the classical FSAI method
\cite{kolotilina1993factorized} is motivated by the Cholesky factorization
\(A=LL^\top\).  In the common orientation \(M=F^\top F\), FSAI seeks a sparse
inverse Cholesky factor \(F\approx L^{-1}\), often through the objective
\[
    \min_{F\in\mathbb S(\mathcal S_F)} \|I-FL\|_F^2 .
\]
The Cholesky factor \(L\) need not be formed explicitly; for a prescribed
pattern, the entries of the inverse factor can be computed from small local
subproblems involving principal submatrices of \(A\).  

A related nonsymmetric factorized approximate inverse construction is AINV
\cite{benzi1998sparse}.  For a nonsymmetric matrix \(A\), AINV seeks sparse
factors \(Z\) and \(W\), together with a diagonal matrix \(D\), such that
\[
    W^\top A Z \approx D .
\]
Equivalently, the columns of \(Z=[z_1,\ldots,z_n]\) and
\(W=[w_1,\ldots,w_n]\) are constructed to be approximately
\(A\)-biorthogonal:
\[
    w_i^\top A z_j \approx 0 \quad (i\neq j),
    \qquad
    d_i = w_i^\top A z_i .
\]
In practice, \(Z\) and \(W\) are generated by an incomplete biconjugation
sweep.  Starting from coordinate vectors, previously computed directions are
used to eliminate the current \(A\)-couplings, while small entries are dropped
to control fill.  Thus the updates have the form
\[
    z_j \leftarrow \operatorname{drop}
    \left(z_j-\frac{w_i^\top A z_j}{d_i}z_i\right),
    \qquad
    w_j \leftarrow \operatorname{drop}
    \left(w_j-\frac{w_j^\top A z_i}{d_i}w_i\right),
    \qquad j>i .
\]
AINV therefore provides a sequential algebraic sparse-inverse factorization,
with sparsity usually controlled by dropping tolerances or fill restrictions.
This contrasts with the fixed-support least-squares construction in
\eqref{eq:spai_frobenius}, while sharing the same goal of producing an
explicit sparse approximation to \(A^{-1}\).

For the fixed-support formulation used here, let \(N_G=|\mathcal S_G|\).
The admissible entries of \(G\) are collected in a vector
\(g\in\mathbb R^{N_G}\) by
\[
    G(g)=\sum_{(i,j)\in\mathcal S_G} g_{ij}E_{ij},
\]
where \(E_{ij}\) has a single nonzero entry 1 at position \((i,j)\).  Entries
outside \(\mathcal S_G\) are fixed to zero.  The same variables \(g\) are used
later for matrix-specific optimization and for parameterized prediction by
\(G_\theta(A,\beta)\).  The spectral objectives for choosing these values are
introduced in Section~\ref{sec:loss}.

To describe the graph-based architecture used later to predict entry values, we also introduce the graph representation associated with a sparse matrix.
Such representations are standard in sparse matrix computations
\cite{parter1961use}.  We use the usual adjacency graph view of the original sparse matrix pattern,
\[
    \mathcal G=(\mathcal V,\mathcal E),
    \qquad
    \mathcal V=\{1,\ldots,n\}.
\]
The node \(i\in\mathcal V\) corresponds to the unknown \(x_i\) in \(Ax=b\).
For \(i\ne j\), an edge \(\{i,j\}\in\mathcal E\) is present when
\(A_{ij}\) or \(A_{ji}\) belongs to the sparsity pattern of \(A\).  
\section{Spectral Objectives for Sparse Inverse Factor Optimization}
\label{sec:loss}

This section defines the spectral objectives used to choose the numerical
values of the admissible entries of \(G\).  We use the convention
\[
    M=GG^\top,
    \qquad
    P(G)=G^\top A G ,
\]
so that, when \(G\) is nonsingular, the symmetric congruent operator \(P(G)\)
is similar to the left-preconditioned operator \(MA=GG^\top A\).  The main
focus is the indefinite case, where the preconditioned spectrum should be
two-sided and separated away from zero.

\subsection{Spectral targets on a fixed support}
\label{sec:loss:targets}

Classical sparse approximate inverse methods usually choose inverse or factor
entries by algebraic residual criteria, such as the Frobenius objective in
\eqref{eq:spai_frobenius}.  
Despite their practical advantages, these criteria do not control the spectrum of the preconditioned operator.
In the fixed-support setting, this distinction is important: the admissible
entries are the same for all methods, and only the numerical values assigned
to those entries are changed.

Let
\[
    \lambda_1(P(G)),\ldots,\lambda_n(P(G))
\]
denote the eigenvalues of
\[
    P(G)=G^\top A G .
\]
Consider trace
objectives
\begin{equation}
\label{eq:exact_spectral_loss}
    \mathcal L_f(G)
    =
    \frac{1}{n}\sum_{i=1}^n f(\lambda_i(P(G)))
    =
    \frac{1}{n}\operatorname{tr}(f(P(G))),
\end{equation}
where \(f:\mathbb R\to\mathbb R\) is a scalar function that specifies the desired spectral behavior of the preconditioned operator.
Although these objectives do not minimize the Krylov
iteration count directly, they penalize spectral features that are unfavorable
for Krylov convergence.

For SPD systems, the natural spectral target is a single cluster near \(+1\). 
Indeed, if \(A\) is SPD and \(G\) is nonsingular, then \(P(G)\) is also SPD. Consequently, clustering the eigenvalues of \(P(G)\) near \(+1\) yields a favorable condition number for the preconditioned system. 
A simple penalty function that promotes this behavior is 
\[
    f_+(\lambda)=(\lambda-1)^2 .
\]
The corresponding trace objective becomes
\[
    \mathcal L_{f_+}(G)
    =
    \frac{1}{n}\operatorname{tr}\!\left((P(G)-I)^2\right)
    =
    \frac{1}{n}\|P(G)-I\|_F^2,
\]
and its minimization induces the ideal relation
\[
    G^\top A G \approx I .
\]
Although this penalty is smooth and simple, the resulting optimization problem over the factor entries is generally nonconvex.

For symmetric indefinite systems, the spectral target must be two-sided.  If
\(G\) is nonsingular, then \(P(G)=G^\top A G\) is congruent to \(A\).  By
Sylvester's law of inertia, \(P(G)\) has the same numbers of positive and
negative eigenvalues as \(A\).  Therefore an indefinite operator cannot be
transformed by this preconditioner into a matrix whose eigenvalues all cluster
near \(+1\).  The corresponding ideal target is instead that positive
eigenvalues cluster near \(+1\) and negative eigenvalues cluster near \(-1\),
while preserving the inertia of \(A.\)  With a prescribed sparse factor pattern,
exact two-point clustering is generally not attainable, but it is still
desirable to move eigenvalues away from zero and toward separated positive and
negative clusters.

We therefore use the bimodal penalty function
\begin{equation}
\label{eq:bimodal_loss}
    f_{\pm}(\lambda)
    =
    (|\lambda|_\delta-1)^2
    +
    \alpha \bigl(\log(|\lambda|_\delta+\varepsilon)\bigr)^2,
\end{equation}
where
\[
    |\lambda|_\delta=(\lambda^2+\delta^2)^{1/2},
    \qquad
    \alpha>0,\quad \varepsilon>0 .
\]

The first term in \eqref{eq:bimodal_loss} is the two-sided clustering term: it
encourages \(|\lambda|\approx 1\), and hence favors clusters near \(+1\) and
\(-1\).  This term already penalizes eigenvalues near zero, but only by a
bounded amount.  For example, without smoothing,
\((|\lambda|-1)^2\) assigns the same penalty to \(\lambda=0\) and
\(\lambda=2\), although an eigenvalue near zero is much more harmful for
Krylov convergence.  The logarithmic term is included as a soft anti-zero
penalty.  It increases the relative cost of small \(|\lambda|\) and
discourages near-null directions in the preconditioned operator.  The
parameter \(\alpha\) controls the strength of this correction, \(\delta\)
smooths the absolute value, and \(\varepsilon\) prevents a singular logarithm.

For differentiability of the loss at the origin, we take \(\delta>0\) in the
analysis.  Defining
\[
    s_\delta(\lambda)=(\lambda^2+\delta^2)^{1/2},
\]
the derivative of the bimodal penalty is
\[
    f_\pm'(\lambda)
    =
    \frac{2\lambda}{s_\delta(\lambda)}
    \left[
        s_\delta(\lambda)-1
        +
        \alpha
        \frac{\log(s_\delta(\lambda)+\varepsilon)}
             {s_\delta(\lambda)+\varepsilon}
    \right].
\]

Figure~\ref{fig:bimodal_loss_demo} illustrates the role of the logarithmic
term.  The clustering term alone already has preferred locations near
\(\pm1\), but the full bimodal loss raises the penalty near zero while
preserving the two-sided structure of the target.  This emphasizes the part of
the spectrum that is most unfavorable for Krylov convergence.  As with the SPD
objective, the resulting factor optimization problem remains nonconvex.

\begin{figure}[t]
\centering
\definecolor{clusteronly}{RGB}{31,119,180}
\definecolor{fullbimodal}{RGB}{217,95,2}

% Visualization parameters
\pgfmathsetmacro{\deltavis}{0.02}
\pgfmathsetmacro{\alphavis}{0.1}
\pgfmathsetmacro{\epsvis}{0.001}

% Exact values at lambda = 0 for the displayed parameters
\pgfmathsetmacro{\szerovis}{sqrt(\deltavis^2)}
\pgfmathsetmacro{\clusterzerovis}{(\szerovis-1)^2}
\pgfmathsetmacro{\fullzerovis}{(\szerovis-1)^2
    + \alphavis*(ln(\szerovis+\epsvis))^2}

\begin{tikzpicture}
\begin{axis}[
    width=0.78\linewidth,
    height=0.42\linewidth,
    xlabel={preconditioned eigenvalue $\lambda$},
    ylabel={penalty},
    xmin=-2.5,
    xmax=2.5,
    ymin=0,
    ymax=4.2,
    domain=-2.5:2.5,
    samples=400,
    xtick={-2,-1,0,1,2},
    legend style={
        font=\scriptsize,
        at={(0.5,1.04)},
        anchor=south,
        legend columns=2
    },
    grid=both,
    grid style={line width=.1pt, draw=gray!25},
    major grid style={line width=.2pt, draw=gray!35},
]

% Two-sided clustering term only
\addplot[thick, dashed, color=clusteronly]
    {(sqrt(x^2+\deltavis^2)-1)^2};
\addlegendentry{$(|\lambda|_\delta-1)^2$}

% Full bimodal loss
\addplot[thick, color=fullbimodal]
    {(sqrt(x^2+\deltavis^2)-1)^2
      + \alphavis*(ln(sqrt(x^2+\deltavis^2)+\epsvis))^2};
\addlegendentry{$f_{\pm}(\lambda)$}

% Reference lines
\addplot[densely dotted, color=gray!65] coordinates {(-1,0) (-1,4.2)};
\addplot[densely dotted, color=gray!65] coordinates {(0,0) (0,4.2)};
\addplot[densely dotted, color=gray!65] coordinates {(1,0) (1,4.2)};

% Reference labels
\node[font=\scriptsize, anchor=south] at (axis cs:-1,3.75) {$-1$};
\node[font=\scriptsize, anchor=south] at (axis cs:0,3.75) {$0$};
\node[font=\scriptsize, anchor=south] at (axis cs:1,3.75) {$+1$};

% Formula-driven arrow showing the added penalty at zero
\draw[<->, thick, color=fullbimodal]
    (axis cs:0,\clusterzerovis) -- (axis cs:0,\fullzerovis);
\node[font=\scriptsize, align=center, anchor=west, color=fullbimodal]
    at (axis cs:0.08,1.7)
    {extra penalty\\near zero};

\node[font=\scriptsize, align=center]
    at (axis cs:-1.65,0.35)
    {preferred\\near $-1$};
\node[font=\scriptsize, align=center]
    at (axis cs:1.65,0.35)
    {preferred\\near $+1$};

\end{axis}
\end{tikzpicture}
\caption{Illustration of the bimodal loss with
\(\delta=0.02\), \(\alpha=0.1\), and \(\varepsilon=10^{-3}\).  The dashed
blue curve shows the two-sided clustering term
\((|\lambda|_\delta-1)^2\), which favors eigenvalues near \(\pm1\) and
penalizes zero by a bounded amount.  The solid orange curve adds the
logarithmic term in \eqref{eq:bimodal_loss}, increasing the relative cost of
small-magnitude eigenvalues while preserving the same two-well structure.}
\label{fig:bimodal_loss_demo}
\end{figure}

\subsection{Exact spectral gradient and support restriction}
\label{sec:loss:gradient}

We next derive the exact gradient of the spectral trace objective.  The result
is useful for two reasons.  First, it identifies the dense spectral quantity
that must be approximated in a scalable method.  Second, it shows that, in the
fixed-support problem, only the entries of this gradient on
\(\mathcal S_G\) are needed.

\begin{proposition}
\label{prop:spectral_gradient}
Let \(A=A^\top\), \(P(G)=G^\top A G\), and
\[
    \mathcal L_f(G)=\frac{1}{n}\operatorname{tr}(f(P(G))).
\]
Assume that \(f\in C^1\) on an open interval containing the spectrum of
\(P(G)\).  Then
\[
    \nabla_G \mathcal L_f(G)
    =
    \frac{2}{n} A G f'(P(G)).
\]
If \(G\) is restricted to the prescribed support \(\mathcal S_G\), then the
gradient with respect to the free entries is
\[
    \frac{\partial \mathcal L_f}{\partial G_{ij}}
    =
    \left(
    \frac{2}{n} A G f'(P(G))
    \right)_{ij},
    \qquad (i,j)\in\mathcal S_G .
\]
\end{proposition}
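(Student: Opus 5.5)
The plan is to compute the Fréchet derivative of \(\mathcal L_f\) along an arbitrary direction \(E\in\mathbb R^{n\times n}\) and read off the gradient through the Frobenius inner product \(\langle X,Y\rangle=\operatorname{tr}(X^\top Y)\). The argument has three steps.

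First, I will invoke the standard trace-derivative identity for symmetric matrix functions. If \(P\) is symmetric with spectrum inside the open interval where \(f\in C^1\), then \(X\mapsto\operatorname{tr}f(X)\) is differentiable at \(P\) (over symmetric matrices), and
\[
    \frac{d}{dt}\operatorname{tr} f(P+tH)\Big|_{t=0}=\operatorname{tr}\bigl(f'(P)H\bigr)
\]
for every symmetric \(H\).

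Second, I will differentiate \(P(G)=G^\top AG\). Its derivative in direction \(E\) is \(H=E^\top AG+G^\top AE\), which is symmetric because \(A=A^\top\). For small \(t\), the spectrum of \(P(G+tE)\) stays inside the open interval, by continuity of eigenvalues. The chain rule then gives
\[
    D\mathcal L_f(G)[E]=\frac1n\operatorname{tr}\bigl(f'(P)(E^\top AG+G^\top AE)\bigr).
\]
Now \(f'(P)\) is symmetric and \(A\) is symmetric. Using cyclicity and invariance of the trace under transposition,
\[
    \operatorname{tr}\bigl(f'(P)E^\top AG\bigr)=\operatorname{tr}\bigl(E^\top AGf'(P)\bigr)
\]
and
\[
    \operatorname{tr}\bigl(f'(P)G^\top AE\bigr)=\operatorname{tr}\bigl(E^\top AGf'(P)\bigr).
\]
Hence \(D\mathcal L_f(G)[E]=\langle \tfrac2n AGf'(P(G)),E\rangle\), which identifies the dense gradient claimed in Proposition~\ref{prop:spectral_gradient}.

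Third, I will handle the support restriction. The map \(g\mapsto G(g)=\sum_{(i,j)\in\mathcal S_G} g_{ij}E_{ij}\) is linear. So \(\partial\mathcal L_f/\partial g_{ij}=D\mathcal L_f(G)[E_{ij}]=\langle\nabla_G\mathcal L_f,E_{ij}\rangle\), which is exactly the \((i,j)\) entry of \(\tfrac2n AGf'(P(G))\).

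The main obstacle is justifying the first identity under the weak hypothesis \(f\in C^1\), as opposed to \(f\) analytic or polynomial. My first attempt will be a spectral argument. Write \(P=Q\Lambda Q^\top\) and use the Daleckii--Krein formula: the derivative of \(f(P)\) in direction \(H\) is \(Q\bigl(F\circ(Q^\top HQ)\bigr)Q^\top\), where \(F\) is the matrix of divided differences of \(f\) at the eigenvalues. Taking the trace keeps only the diagonal entries of \(F\), which are \(F_{ii}=f'(\lambda_i)\). This gives
\[
    \sum_i f'(\lambda_i)(Q^\top HQ)_{ii}=\operatorname{tr}\bigl(f'(P)H\bigr).
\]
This route works even with repeated eigenvalues.

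If I want to avoid citing Daleckii--Krein, I will use a more elementary fallback. First prove the identity for polynomials, where cyclicity gives \(\tfrac{d}{dt}\operatorname{tr}(P+tH)^k=k\operatorname{tr}(P^{k-1}H)\). Then extend to \(C^1\) functions by approximating \(f\) and \(f'\) uniformly on a compact neighborhood of the spectrum by polynomials \(p_m\) and \(p_m'\). Finally, pass to the limit using the mean value form
\[
    \operatorname{tr} f(P+tH)-\operatorname{tr} f(P)=\int_0^1 \operatorname{tr}\bigl(f'(P+stH)\,tH\bigr)\,ds
\]
together with continuity of \(X\mapsto f'(X)\) on symmetric matrices.
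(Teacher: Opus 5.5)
Your proposal is correct and follows the paper's own proof. Both use the differential identity \(d\,\operatorname{tr} f(P)=\operatorname{tr}(f'(P)\,dP)\), the expansion \(dP=dG^\top AG+G^\top A\,dG\), and the same two trace rearrangements based on cyclicity and the symmetry of \(A\) and \(f'(P)\). Both then handle the support restriction by noting that admissible perturbations are combinations of the \(E_{ij}\) with \((i,j)\in\mathcal S_G\).

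The only difference is that you also justify the trace-derivative identity for merely \(C^1\) functions, via Daleckii--Krein or polynomial approximation. The paper simply asserts that step.
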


\begin{proof}
Let
\[
    F=f'(P(G)).
\]
Since \(P(G)\) is symmetric and \(f\) is differentiable on an interval
containing its spectrum,
\[
    d\,\operatorname{tr}(f(P))=\operatorname{tr}(F\,dP).
\]
For
\[
    P(G)=G^\top A G,
\]
we have
\[
    dP=dG^\top A G+G^\top A\,dG .
\]
Therefore
\[
\begin{aligned}
    d\mathcal L_f
    &=
    \frac{1}{n}
    \operatorname{tr}\!\left(
        F(dG^\top A G+G^\top A\,dG)
    \right).
\end{aligned}
\]
For the first term,
\[
    \operatorname{tr}(F dG^\top A G)
    =
    \operatorname{tr}(dG^\top A G F)
    =
    \operatorname{tr}\!\left((A G F)^\top dG\right).
\]
For the second term, using the symmetry of \(A\) and \(F\),
\[
    \operatorname{tr}(F G^\top A\,dG)
    =
    \operatorname{tr}\!\left((A G F)^\top dG\right).
\]
Hence
\[
    d\mathcal L_f
    =
    \frac{2}{n}
    \operatorname{tr}\!\left((A G F)^\top dG\right),
\]
from which the full gradient follows as
\[
    \nabla_G \mathcal L_f(G)
    =
    \frac{2}{n} A G F
    =
    \frac{2}{n} A G f'(P(G)).
\]
If \(G\) is represented by independent entries on \(\mathcal S_G\), then
admissible perturbations \(dG\) are supported only on \(\mathcal S_G\).  Thus
only the corresponding entries of the full matrix gradient are active.
\end{proof}

Proposition~\ref{prop:spectral_gradient} identifies the computational
bottleneck in the dense spectral gradient:
\[
    \nabla_G\mathcal L_f(G)=2A G H(G),
    \qquad
    H(G)=\frac1n f'(P(G)).
\]
The matrix \(H(G)\) is generally dense and requires global spectral information
to form.  The next section develops matrix-free Krylov approximations to this
spectral-gradient structure and evaluates the resulting gradient only on the
prescribed support.
\section{Projected Krylov Differentiation}
\label{sec:projected_ad}
In this section, we develop two Krylov-projected backward rules for computing
matrix-free gradients on the prescribed factor support.  The main SLQ-based
rule uses Lanczos Ritz data to approximate the spectral derivative matrix
\(H(G)=n^{-1}f'(P(G))\), and then evaluates the resulting gradient only on
\(\mathcal S_G\) through a detached Rayleigh surrogate.  We also describe a
projected KPM rule that differentiates a finite Chebyshev polynomial trace
estimator, providing a finite-polynomial comparison to the Lanczos--Rayleigh
construction.

\subsection{Lanczos value and derivative estimates}
\label{sec:projected_ad:slq}

Let \(G_0\) be the current factor and set
\[
    P_0=P(G_0)=G_0^\top A G_0,
    \qquad
    H_0=\frac1n f'(P_0).
\]
At \(G_0\), an optimizer needs two quantities: a scalar objective value and a
gradient with respect to the admissible entries of \(G\).  Lanczos provides
both from the same Krylov runs.  The scalar SLQ estimator approximates
\(\mathcal L_f(G_0)\), while the same Ritz data give a projected approximation
to \(H_0\), the spectral matrix appearing in the exact gradient formula
\[
    \nabla_G\mathcal L_f(G_0)=2A G_0 H_0 .
\]

We now define the Lanczos quantities from which both the value estimate and
the projected derivative approximation are built.  Let
\(z_\ell\), \(\ell=1,\ldots,N_{\mathrm p}\), be random probe vectors satisfying
\[
    \mathbb E[z_\ell z_\ell^\top]=I .
\]
For each probe, we run \(m\) Lanczos steps on \(P_0\), starting from the normalized vector
\[
    q_{\ell,1}=z_\ell/\|z_\ell\|_2 .
\]
This yields to
\[
    P_0Q_\ell
    =
    Q_\ell T_\ell
    +
    \beta_{\ell,m}q_{\ell,m+1}e_m^\top,
    \qquad
    Q_\ell^\top Q_\ell=I .
\]
We then compute the eigendecomposition of $T_\ell,$ defining 
\[
    T_\ell y_{\ell j}=\theta_{\ell j}y_{\ell j},
    \qquad
    v_{\ell j}=Q_\ell y_{\ell j},
    \qquad
    \omega_{\ell j}=(e_1^\top y_{\ell j})^2 ,
\]
where \(\theta_{\ell j}\) are the Ritz values, \(v_{\ell j}\) the Ritz vectors, and \(\omega_{\ell j}\) the usual SLQ weights.

The value estimate is the standard SLQ trace estimator
\begin{equation}
\label{eq:slq_estimator}
    \widehat{\mathcal L}_{m,N_{\mathrm p}}(G_0)
    =
    \frac{1}{nN_{\mathrm p}}
    \sum_{\ell=1}^{N_{\mathrm p}}
    \|z_\ell\|_2^2
    \sum_{j=1}^{m}
    \omega_{\ell j} f(\theta_{\ell j}) .
\end{equation}
This scalar is used as the matrix-free objective value at \(G_0\), for example
in line search, monitoring, or comparing candidate factors.

For the gradient, we reuse the same Ritz values, Ritz vectors, and weights,
but apply them to \(f'\).  To make this connection explicit, suppose that
\[
    P_0=\sum_{r=1}^n \lambda_r u_r u_r^\top .
\]
Then
\[
    H(G_0)
    =
    \frac1n f'(P_0)
    =
    \frac1n\sum_{r=1}^n f'(\lambda_r)u_r u_r^\top .
\]
For an isotropic probe \(z\),
\[
    \mathbb E[(z^\top u_r)^2]
    =
    u_r^\top\mathbb E[zz^\top]u_r
    =
    1 .
\]
Thus \(H(G_0)\) can be written as an expectation of probe-weighted spectral
expansions:
\[
\begin{aligned}
    \mathbb E_z
    \left[
    \frac1n
    \sum_{r=1}^n
    (z^\top u_r)^2 f'(\lambda_r)u_r u_r^\top
    \right]
    &=
    \frac1n
    \sum_{r=1}^n
    \mathbb E_z\!\left[(z^\top u_r)^2\right]
    f'(\lambda_r)u_r u_r^\top  \\
    &=
    \frac1n
    \sum_{r=1}^n
    f'(\lambda_r)u_r u_r^\top
    =
    H(G_0).
\end{aligned}
\]
A finite number of probes replaces this expectation by an empirical average,
and Lanczos replaces the exact eigenpairs
\((\lambda_r,u_r)\) by Ritz pairs \((\theta_{\ell j},v_{\ell j})\).
Moreover, since \(q_{\ell,1}=z_\ell/\|z_\ell\|_2\) and
\(v_{\ell j}=Q_\ell y_{\ell j}\),
\[
    (z_\ell^\top v_{\ell j})^2
    =
    \|z_\ell\|_2^2(e_1^\top y_{\ell j})^2
    =
    \|z_\ell\|_2^2\omega_{\ell j}.
\]
This motivates the matrix-valued Ritz approximation
\begin{equation}
\label{eq:slq_projected_measure}
    \widehat H(G_0)
    =
    \frac{1}{nN_{\mathrm p}}
    \sum_{\ell=1}^{N_{\mathrm p}}
    \|z_\ell\|_2^2
    \sum_{j=1}^{m}
    \omega_{\ell j}
    f'(\theta_{\ell j})
    v_{\ell j}v_{\ell j}^\top .
\end{equation}
The value estimate \eqref{eq:slq_estimator} and the derivative approximation
\eqref{eq:slq_projected_measure} therefore reuse the same probe vectors,
Lanczos bases, Ritz values, Ritz vectors, and quadrature weights.  The value
estimate uses \(f(\theta_{\ell j})\), whereas the gradient approximation uses
\(f'(\theta_{\ell j})v_{\ell j}v_{\ell j}^\top\).

Substituting \(\widehat H(G_0)\) for \(H(G_0)\) in the dense gradient formula
gives
\[
    \nabla_G\mathcal L_f(G_0)
    =
    2A G_0 H(G_0)
    \;\approx\;
    2A G_0\widehat H(G_0).
\]
The dense matrix \(\widehat H(G_0)\) is not formed.  For an admissible entry
\((i,k)\in\mathcal S_G\), the corresponding projected gradient entry is
\begin{equation}
\label{eq:ritz_support_gradient}
    \widehat g_{ik}(G_0)
    =
    \frac{2}{nN_{\mathrm p}}
    \sum_{\ell=1}^{N_{\mathrm p}}
    \|z_\ell\|_2^2
    \sum_{j=1}^{m}
    \omega_{\ell j}
    f'(\theta_{\ell j})
    \bigl(A G_0 v_{\ell j}\bigr)_i
    v_{\ell j,k}.
\end{equation}
This formula is the restriction of \(2A G_0\widehat H(G_0)\) to
\(\mathcal S_G\).  It uses sparse products with \(G_0\) and \(A\), followed by
gathers on the prescribed support.

The approximation error is controlled by the accuracy of
\(\widehat H(G_0)\) as an approximation to \(H(G_0)\).  Let
\(\Pi_{\mathcal S_G}\) denote the Frobenius-orthogonal projection onto the
prescribed support, and define
\[
    g_{\mathcal S}^{\rm exact}
    =
    \Pi_{\mathcal S_G}\!\left(2A G_0H(G_0)\right),
    \qquad
    g_{\mathcal S}^{\rm Ritz}
    =
    \Pi_{\mathcal S_G}\!\left(2A G_0\widehat H(G_0)\right).
\]
Then
\[
    g_{\mathcal S}^{\rm Ritz}-g_{\mathcal S}^{\rm exact}
    =
    2\Pi_{\mathcal S_G}
    \left(A G_0(\widehat H(G_0)-H(G_0))\right),
\]
and since \(\Pi_{\mathcal S_G}\) is nonexpansive in the Frobenius norm,
\[
    \|g_{\mathcal S}^{\rm Ritz}-g_{\mathcal S}^{\rm exact}\|_F
    \le
    2\|A G_0\|_F\,\|\widehat H(G_0)-H(G_0)\|_2 .
\]
Thus the support-gradient error is reduced to the matrix-function
approximation error \(\widehat H(G_0)-H(G_0)\).  This error has two sources:
finite-probe sampling error and Lanczos/Ritz approximation error.  In
practice, the projected gradient is most useful when the Ritz vectors capture
the spectral components emphasized by \(f'\), especially the near-zero
components emphasized by the bimodal indefinite loss.

The next subsection shows how the same value-gradient pair can be implemented
by differentiating a detached Rayleigh surrogate: the forward pass recovers
the scalar estimate \eqref{eq:slq_estimator} at \(G_0\), and the backward pass
returns the support-gradient \eqref{eq:ritz_support_gradient} without
differentiating through the Lanczos recurrence.

\subsection{Detached Rayleigh implementation of the support-gradient}
\label{sec:projected_ad:rayleigh}

The previous subsection derived the projected support-gradient
\eqref{eq:ritz_support_gradient}.  This formula can be implemented directly
using sparse products with \(G_0\) and \(A\), followed by gathers on
\(\mathcal S_G\).  Such an implementation is mathematically straightforward
but requires careful bookkeeping over the support and over all Ritz vectors.
We now show that the same local support-gradient is obtained by differentiating
a detached Rayleigh surrogate.

At the current factor \(G_0\), compute the Lanczos data
\[
    \{v_{\ell j},\omega_{\ell j},\theta_{\ell j}\}.
\]
In the local backward calculation, the Ritz vectors and quadrature weights are
held fixed.  For a variable factor \(G\), define the Rayleigh quotients
\[
    \rho_{\ell j}(G)
    =
    v_{\ell j}^{\top}P(G)v_{\ell j}
    =
    v_{\ell j}^{\top}G^\top A G v_{\ell j}.
\]
The detached Rayleigh surrogate is
\begin{equation}
\label{eq:detached_rayleigh_surrogate}
    \widetilde{\mathcal L}_{m,N_{\mathrm p}}(G;G_0)
    =
    \frac{1}{nN_{\mathrm p}}
    \sum_{\ell=1}^{N_{\mathrm p}}
    \|z_\ell\|_2^2
    \sum_{j=1}^{m}
    \omega_{\ell j}
    f\!\left(\rho_{\ell j}(G)\right).
\end{equation}
Only the Rayleigh quotients depend on \(G\).  The Lanczos computation is
repeated at each new forward evaluation, so the Ritz data track the current
factor, but they are fixed for the local backward pass.

\begin{proposition}[Detached Rayleigh support-gradient]
\label{prop:detached_rayleigh_gradient}
Assume \(A=A^\top\) and suppose \(f\) is differentiable at the Rayleigh
quotients \(\rho_{\ell j}(G)\).  With \(v_{\ell j}\) and \(\omega_{\ell j}\)
held fixed,
\[
    \nabla_G
    \widetilde{\mathcal L}_{m,N_{\mathrm p}}(G;G_0)
    =
    2A G\widehat H(G;G_0),
\]
where
\[
    \widehat H(G;G_0)
    =
    \frac{1}{nN_{\mathrm p}}
    \sum_{\ell=1}^{N_{\mathrm p}}
    \|z_\ell\|_2^2
    \sum_{j=1}^{m}
    \omega_{\ell j}
    f'\!\left(\rho_{\ell j}(G)\right)
    v_{\ell j}v_{\ell j}^{\top}.
\]
At \(G=G_0\), in exact arithmetic,
\[
    \rho_{\ell j}(G_0)=\theta_{\ell j},
    \qquad
    \widehat H(G_0;G_0)=\widehat H(G_0).
\]
Consequently, the restriction of this gradient to \(\mathcal S_G\) agrees
with the Lanczos--Ritz support-gradient in \eqref{eq:ritz_support_gradient}.
\end{proposition}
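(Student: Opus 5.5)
The plan is to differentiate the surrogate term by term, with the detached data $v_{\ell j}$, $\omega_{\ell j}$, $\|z_\ell\|_2^2$ treated as constants. That reduces the computation to the gradient of a single Rayleigh quotient, after which the chain rule gives the general formula. The identification at $G_0$ then uses the Lanczos relation.

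First, fix $(\ell,j)$ and write $v=v_{\ell j}$. We have $\rho(G)=v^\top G^\top A G v$. Perturbing $G\mapsto G+dG$ gives
\[
d\rho = v^\top dG^\top A G v + v^\top G^\top A\, dG\, v .
\]
Using $A=A^\top$, both terms equal $\operatorname{tr}\bigl((AGvv^\top)^\top dG\bigr)$. Hence $\nabla_G\rho = 2AGvv^\top$.

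This is the rank-one analogue of the computation in Proposition~\ref{prop:spectral_gradient}, with $F$ replaced by $vv^\top$. Next we apply the scalar chain rule, $\nabla_G f(\rho)=f'(\rho)\,2AGvv^\top$, which uses differentiability of $f$ at $\rho_{\ell j}(G)$. Summing with the fixed weights $\|z_\ell\|_2^2\omega_{\ell j}/(nN_{\mathrm p})$ and pulling the common left factor $2AG$ out of the sum yields $2AG\widehat H(G;G_0)$ by linearity.

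Second, at $G=G_0$ we need $\rho_{\ell j}(G_0)=\theta_{\ell j}$. We have $\rho_{\ell j}(G_0)=y_{\ell j}^\top Q_\ell^\top P_0 Q_\ell y_{\ell j}$. From the Lanczos relation and exact-arithmetic orthogonality $Q_\ell^\top q_{\ell,m+1}=0$, we get $Q_\ell^\top P_0Q_\ell=T_\ell$. Since $y_{\ell j}$ is a unit eigenvector of $T_\ell$, this gives $\rho_{\ell j}(G_0)=\theta_{\ell j}$. Substituting into the definition of $\widehat H(G;G_0)$ reproduces \eqref{eq:slq_projected_measure} exactly, so $\widehat H(G_0;G_0)=\widehat H(G_0)$.

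Finally, the $(i,k)$ entry of $2AG_0\widehat H(G_0)$ is
\[
\frac{2}{nN_{\mathrm p}}\sum_\ell\|z_\ell\|_2^2\sum_j\omega_{\ell j}f'(\theta_{\ell j})(AG_0v_{\ell j})_i v_{\ell j,k},
\]
which is \eqref{eq:ritz_support_gradient}. Restricting to $(i,k)\in\mathcal S_G$ then follows by the same support-restriction argument as in Proposition~\ref{prop:spectral_gradient}: admissible perturbations $dG$ are supported on $\mathcal S_G$.

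The main obstacle is the identity $\rho_{\ell j}(G_0)=\theta_{\ell j}$. It relies on exact orthogonality of the Lanczos basis and on $y_{\ell j}$ being normalized. Both must be stated carefully, because in floating point the residual term $\beta_{\ell,m}q_{\ell,m+1}e_m^\top$ is not exactly annihilated. The rest is routine matrix calculus.
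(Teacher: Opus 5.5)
Your proposal is correct and follows the paper's proof essentially step for step: the same rank-one differential of the Rayleigh quotient, the scalar chain rule with the detached data held fixed, and the Ritz identity $\rho_{\ell j}(G_0)=\theta_{\ell j}$ at $G_0$. The only difference is that you derive that identity explicitly, obtaining $Q_\ell^\top P_0 Q_\ell = T_\ell$ from the Lanczos relation and using that $y_{\ell j}$ is a unit eigenvector, whereas the paper simply cites ``the Ritz relation.''
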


\begin{proof}
For a fixed vector \(v\), let
\[
    \rho(G)=v^\top G^\top A G v .
\]
Since \(A=A^\top\),
\[
\begin{aligned}
    d\rho
    &=
    v^\top dG^\top A G v
    +
    v^\top G^\top A\,dG\,v  \\
    &=
    2\,\operatorname{tr}\!\left((A G vv^\top)^\top dG\right).
\end{aligned}
\]
Applying the scalar chain rule to each term in
\eqref{eq:detached_rayleigh_surrogate}, with \(v_{\ell j}\) and
\(\omega_{\ell j}\) fixed, gives
\[
    \nabla_G
    \widetilde{\mathcal L}_{m,N_{\mathrm p}}(G;G_0)
    =
    2A G\widehat H(G;G_0).
\]
At \(G=G_0\), the Ritz relation gives
\[
    v_{\ell j}^{\top}P(G_0)v_{\ell j}=\theta_{\ell j}
\]
in exact arithmetic, so \(\widehat H(G_0;G_0)=\widehat H(G_0)\).  Restricting
the resulting matrix gradient to \(\mathcal S_G\) gives
\eqref{eq:ritz_support_gradient}.
\end{proof}

Thus the detached Rayleigh surrogate is not a different projected-gradient
estimator.  It is an implementation of the same local support-gradient
\eqref{eq:ritz_support_gradient}.  At the current factor, its forward value
agrees with the SLQ value estimate,
\[
    \widetilde{\mathcal L}_{m,N_{\mathrm p}}(G_0;G_0)
    =
    \widehat{\mathcal L}_{m,N_{\mathrm p}}(G_0),
\]
while its backward pass returns the projected support-gradient.  Therefore a
detached automatic-differentiation implementation evaluates one scalar
surrogate: the forward value can be used by a line-search method or for
diagnostics, and the backward pass gives the gradient used to update the
admissible entries of \(G\).  If \(G\) is represented directly by its free
entries on \(\mathcal S_G\), automatic differentiation returns a
support-indexed gradient and does not form the dense matrix \(\widehat H\) or
the dense product \(AG\widehat H\).

This rule should be distinguished from full automatic differentiation of the
finite SLQ estimator \eqref{eq:slq_estimator}.  Full SLQ automatic
differentiation differentiates through the Lanczos recurrence, including the
Lanczos basis, Ritz values, Ritz vectors, and quadrature weights, with respect
to \(G\).  By contrast, the detached rule uses the Lanczos pass only to define
local spectral data; it then holds these data fixed and differentiates the
recomputed Rayleigh quotients.  The resulting gradient is therefore biased
relative to the pathwise derivative of the finite SLQ estimator, but it is
designed to approximate the support-restricted gradient of the underlying
spectral trace objective.  The numerical experiments in
\Cref{sec:exp:backward_ablation} show that, on the tested problems, this
detached Rayleigh gradient is both cheaper and better aligned with the dense
support-restricted spectral gradient than full SLQ automatic differentiation.

Figure~\ref{fig:direct_vs_detached_lanczos} summarizes this distinction.  In
the detached implementation, the only differentiated quantities are the
Rayleigh quotients \(v_{\ell j}^{\top}P(G)v_{\ell j}\), and the output is the
gradient on the prescribed support \(\mathcal S_G\).

\begin{figure}[t]
\centering
\definecolor{diffblue}{RGB}{31,119,180}
\definecolor{detachorange}{RGB}{217,95,2}
\definecolor{boxgray}{RGB}{245,245,245}
\definecolor{lightblue}{RGB}{232,242,250}
\definecolor{lightorange}{RGB}{254,239,224}
\definecolor{lightgreen}{RGB}{235,247,236}
\definecolor{lightpurple}{RGB}{242,235,250}
\definecolor{darkgray}{RGB}{70,70,70}

\resizebox{\linewidth}{!}{%
\begin{tikzpicture}[
    font=\small,
    box/.style={
        draw=darkgray,
        rounded corners,
        align=center,
        minimum height=0.78cm,
        inner xsep=6pt,
        inner ysep=4pt,
        fill=boxgray
    },
    opbox/.style={box, fill=lightblue},
    krylovbox/.style={box, fill=lightorange},
    lossbox/.style={box, fill=lightgreen},
    gradbox/.style={box, fill=lightpurple},
    detachbox/.style={box, fill=orange!15},
    ad/.style={
        -latex,
        thick,
        draw=diffblue,
        shorten <=2pt,
        shorten >=2pt
    },
    nograd/.style={
        -latex,
        thick,
        dashed,
        draw=detachorange,
        shorten <=2pt,
        shorten >=2pt
    },
    lab/.style={font=\scriptsize}
]

% ============================================================
% Panel (a): direct AD through finite SLQ
% ============================================================
\node[anchor=west, text=diffblue] at (-0.1,1.1)
{\textbf{(a) Direct AD through the finite SLQ estimator}};

\node[opbox]     (g1)    at (0,0)      {$G$};
\node[opbox]     (p1)    at (1.7,0)    {$P(G)$};
\node[krylovbox] (lan1)  at (3.9,0)    {Lanczos\\recurrence};
\node[krylovbox] (tri1)  at (6.0,0)    {$Q_\ell,T_\ell$};
\node[krylovbox] (eig1)  at (8.0,0)    {eig$(T_\ell)$};
\node[krylovbox] (quad1) at (10.5,0)   {$\theta_{\ell j},\omega_{\ell j},v_{\ell j}$};
\node[lossbox]   (loss1) at (13.6,0)   {$\widehat{\mathcal L}_{m,N_{\mathrm p}}(G)$};
\node[gradbox]   (grad1) at (16.2,0)   {$\nabla_G\widehat{\mathcal L}$};

\draw[ad] (g1) -- (p1);
\draw[ad] (p1) -- (lan1);
\draw[ad] (lan1) -- (tri1);
\draw[ad] (tri1) -- (eig1);
\draw[ad] (eig1) -- (quad1);
\draw[ad] (quad1) -- (loss1);
\draw[ad] (loss1) -- (grad1);

\node[lab, text=darkgray, align=center] at (8.2,-0.85)
{The differentiated graph contains the Lanczos recurrence, Ritz values/vectors,\\
quadrature weights, and eigendecomposition of \(T_\ell\).};

% ============================================================
% Panel (b): detached Rayleigh support-gradient
% ============================================================
\node[anchor=west, text=detachorange] at (-0.1,-2.05)
{\textbf{(b) Detached Rayleigh support-gradient}};

% Detached forward spectral-data stage
\node[opbox]     (g0)    at (0,-3.15)     {$G_0$};
\node[opbox]     (p0)    at (1.7,-3.15)   {$P(G_0)$};
\node[krylovbox] (lan0)  at (3.9,-3.15)   {Lanczos\\computation};
\node[detachbox] (data0) at (6.6,-3.15)   {detach\\$v_{\ell j},\omega_{\ell j}$};

\draw[nograd] (g0) -- (p0);
\draw[nograd] (p0) -- (lan0);
\draw[nograd] (lan0) --
    node[above,lab,fill=white,inner sep=1pt,text=detachorange] {no AD}
    (data0);

\node[lab, text=detachorange, align=center] at (3.4,-4.0)
{Stage 1: compute spectral data once at \(G_0\), without gradient tracking.};

% Differentiated Rayleigh quotient stage
\node[opbox]   (g2)    at (0,-5.55)      {$G$};
\node[opbox]   (p2)    at (1.7,-5.55)    {$P(G)$};
\node[lossbox] (rho)   at (5.3,-5.55)    {$\rho_{\ell j}(G)
      =v_{\ell j}^{\top}P(G)v_{\ell j}$};
\node[lossbox] (loss2) at (9.0,-5.55)    {$\widetilde{\mathcal L}(G)$};
\node[gradbox] (grad2) at (13.0,-5.55)   {$\displaystyle
      \left.\frac{\partial \widetilde{\mathcal L}}
      {\partial G_{ik}}\right|_{(i,k)\in\mathcal S_G}$};

\draw[ad] (g2) -- (p2);
\draw[ad] (p2) -- (rho);
\draw[ad] (rho) -- (loss2);
\draw[ad] (loss2) --
    node[above,lab,fill=white,inner sep=1pt] {restrict to \(\mathcal S_G\)}
    (grad2);

\draw[nograd] (data0) --
    node[right,lab,fill=white,inner sep=1pt,text=detachorange]
    {fixed}
    (rho);

\node[lab, text=darkgray, align=center] at (7.2,-6.45)
{Stage 2: differentiate only the Rayleigh quotients with fixed \(v_{\ell j}\) and \(\omega_{\ell j}\).};

% Legend
\draw[ad] (14.4,-3.15) -- (15.1,-3.15);
\node[anchor=west,lab] at (15.2,-3.15) {differentiated};

\draw[nograd] (14.4,-3.55) -- (15.1,-3.55);
\node[anchor=west,lab] at (15.2,-3.55) {detached / no AD};

\end{tikzpicture}%
}
\caption{Computational graphs for direct differentiation of the finite SLQ
estimator and the detached Rayleigh support-gradient implementation.  In
panel (a), automatic differentiation is applied to the entire finite SLQ
pipeline, so the graph includes the Lanczos recurrence, the projected
tridiagonal matrix \(T_\ell\), the eigendecomposition of \(T_\ell\), and the
Ritz data used in the quadrature estimator.  In panel (b), the Lanczos
computation is first performed at the current factor \(G_0\) without gradient
tracking.  The resulting vectors and weights are detached and then treated as
fixed data.  The backward calculation differentiates only the recomputed
Rayleigh quotients \(v_{\ell j}^{\top}P(G)v_{\ell j}\), and the final gradient
is gathered on the prescribed support \(\mathcal S_G\).}
\label{fig:direct_vs_detached_lanczos}
\end{figure}

\subsection{Projected KPM backward}
\label{sec:projected_ad:kpm}

We also consider a projected Kernel Polynomial Method (KPM) backward rule
\cite{lin2016approximating,xi2018fast}.  In contrast to the
Lanczos--Rayleigh rule, which uses Ritz data to approximate
\(H(G)=n^{-1}f'(P(G))\), KPM first replaces the spectral function by a finite
Chebyshev polynomial and differentiates the resulting polynomial trace
estimator.

Let
\[
    B(G)=\frac{P(G)-cI}{a},
    \qquad a>0,
\]
with fixed scaling parameters \(a,c\).  If \(g(t)=f(c+at)\), let
\[
    p_M(t)=\sum_{k=0}^{M-1}\alpha_k T_k(t)
\]
be a degree-\((M-1)\) Chebyshev approximation, including any fixed damping
factors.  With probe vectors \(z_r\), the finite KPM estimator is
\begin{equation}
\label{eq:kpm_estimator}
    \widehat{\mathcal L}^{\mathrm{kpm}}_{M,R}(G)
    =
    \frac{1}{nR}\sum_{r=1}^R z_r^\top p_M(B(G))z_r .
\end{equation}

At the current factor \(G_0\), set \(B_0=B(G_0)\) and form
\[
    \mathcal Z_M
    =
    \operatorname{span}
    \{T_k(B_0)z_r:\;0\le k\le M-1,\;1\le r\le R\}.
\]
Let \(Z\) be an orthonormal basis for this space, computed without gradient
tracking and then held fixed.  Define
\[
    B_Z(G)=Z^\top B(G)Z,
    \qquad
    \zeta_r=Z^\top z_r .
\]
The projected estimator is
\begin{equation}
\label{eq:kpm_projected_estimator}
    \widetilde{\mathcal L}^{\mathrm{kpm}}_{M,R}(G)
    =
    \frac{1}{nR}\sum_{r=1}^R
    \zeta_r^\top
    p_M\!\left(B_Z(G)\right)
    \zeta_r .
\end{equation}
With fixed probes, fixed scaling, and the full projected basis \(Z\), the
projected KPM backward pass matches the gradient of the finite polynomial
estimator \eqref{eq:kpm_estimator} at the current point \(G_0\).  In other
words, after replacing \(f\) by the polynomial \(p_M\), the projected
computation does not introduce an additional local gradient approximation.

Let
\[
    S_Z
    =
    \operatorname{sym}\!\left(
        \nabla_{B_Z}
        \widetilde{\mathcal L}^{\mathrm{kpm}}_{M,R}(G)
    \right),
\]
where \(\operatorname{sym}(X)=(X+X^\top)/2\).  Since
\[
    B(G)=\frac{G^\top A G-cI}{a},
\]
the pullback from the projected matrix \(B_Z(G)\) to the factor is
\[
    \nabla_G
    \widetilde{\mathcal L}^{\mathrm{kpm}}_{M,R}(G)
    =
    \frac{2}{a} A G ZS_ZZ^\top ,
\]
restricted to \(\mathcal S_G\).  This is the \(P(G)=G^\top A G\) analogue of
the projected KPM backward rule.

The remaining approximation is the KPM approximation itself: the polynomial
estimator \eqref{eq:kpm_estimator} is still only an approximation to the 
spectral loss \(n^{-1}\operatorname{tr}(f(P(G)))\).  This distinction is
important for preconditioner construction.  A gradient can be exact for the
finite KPM objective and still be a poor direction for the original spectral
objective if the polynomial approximation, scaling, damping, or probe set
distort the spectral features that matter for Krylov convergence.  For the
bimodal loss, the most important region is near \(\lambda=0\), where the
logarithmic anti-zero term changes rapidly.  A global finite-degree Chebyshev
approximation can smooth this region, so optimizing the finite KPM objective
may not move the harmful near-zero eigenvalues as effectively as the
Lanczos--Rayleigh rule.
\section{Numerical Experiments}
\label{sec:experiments}

In this section, we evaluate the proposed approach on finite-element PDE test
problems.  We compare spectral and Frobenius objectives on fixed factor
supports, evaluate the proposed backward rule for matrix-free spectral
optimization, and assess a graph neural network model that predicts admissible factor entries across related matrices.

\subsection{Test matrices and experimental setup}
\label{sec:exp:setup}

We use three symmetric finite-element matrix families generated from standard
MFEM example problems~\cite{anderson2021mfem} on different meshes, retaining
the MFEM example names.  The \texttt{ex2} family is linear elasticity,
\[
    -\nabla\!\cdot\sigma(u)=f,
\]
on a two-dimensional triangular beam.  The \texttt{ex3} family is a definite
Maxwell problem,
\[
    \nabla\times(\mu^{-1}\nabla\times E)+\sigma E=f,
\]
on a two-dimensional square-disc.  The \texttt{ex5} family is a mixed
Darcy problem,
\[
    k u+\nabla p=f,\qquad -\nabla\!\cdot u=g,
\]
on a two-dimensional star-shaped domain.  The first two families are SPD,
whereas \texttt{ex5} is a symmetric indefinite saddle-point system.

The admissible support of \(G\) is fixed before optimizing its values.  Let
\[
    \mathcal S_p=\operatorname{triu}(\operatorname{pattern}(A^p)),
    \qquad p\in\{2,3\},
\]
denote the prescribed triangular structural-power support.  The FSAI baseline uses the Kolotilina--Yeremin columnwise construction on \(\mathcal S_p\).

We also include AINV as a nonsymmetric reference, using the biconjugation
procedure of Benzi et al.~\cite{benzi1998sparse}. 
We only tested the fixed-support version in which both triangular factors \(Z\) and \(W\) are
restricted to \(\mathcal S_p\).  
The AINV baselines therefore use approximately twice the storage of a single symmetric factor on \(\mathcal S_p\).  

Because AINV is nonsymmetric, all
preconditioners are evaluated with restarted flexible GMRES (FGMRES). 
We use restart \(50\), except for \texttt{ex2} where restart \(100\) is needed to avoid stagnation.  
Iteration counts are averaged over five standard-normal right-hand sides.  The relative tolerance is \(10^{-6}\), the maximum iteration count is \(2000\), and \texttt{fail} denotes failure of at least one right-hand side.

All computations use double precision, except the graph-network factor model of Section~\ref{sec:exp:factor_model}, which is trained and evaluated in single precision; the resulting preconditioner is applied within double-precision FGMRES as for every method.
The differentiable spectral losses and the matrix-specific spectral targets were implemented in PyTorch; the graph-network factor model was implemented in JAX/Flax. Motivated by prior evidence that preconditioned stochastic-gradient methods can improve optimization in scientific machine-learning problems~\cite{scott2026design}, we trained the graph-network factor model with the SOAP optimizer~\cite{vyas2024soap}.  MFEM was used only for offline matrix generation.
The experiments were run on an Ubuntu 24.04.4 LTS machine equipped with 64 GB of system memory, an Intel i7-12700K CPU, and an NVIDIA RTX 3070 Ti GPU. The main dependencies were PyTorch 2.9.1, JAX (with Flax and opttx), and CUDA 13.0. 

\subsection{Fixed-pattern spectral objectives}
\label{sec:exp:fixed_pattern}

The first set of experiments isolates the effect of the spectral loss directly,
without a neural network.  We fix the factor support and select the nonzero
entries by minimizing a loss with L-BFGS, so that the F-norm and spectral runs differ only in the objective used to choose the admissible values.  We compare the bimodal spectral loss \eqref{eq:bimodal_loss} against an algebraic baseline that minimizes the Frobenius residual
\[
    \|I-G^\top A G\|_F ,
\]
which we call the \emph{F-norm} baseline, and against the classical FSAI and AINV factors.

For the spectral objective, we use \(\alpha=0.1\) and
\(\varepsilon=10^{-8}\).  The analysis in Section~\ref{sec:loss} assumes
\(\delta>0\) for differentiability of the smoothed absolute value.  In the
small dense experiments of Table~\ref{tab:fixed_loss_ceiling_a2}, where the
full spectrum is available, we use the unsmoothed choice \(\delta=0\), i.e.,
the exact \(|\lambda|\); we label this exact-eigendecomposition variant
\emph{EigSpec}.  In the matrix-free differentiable experiments below,
the smoothed loss is used.  We use the bimodal loss uniformly across the three
families: on the positive spectrum of the SPD cases, it reduces to a
one-sided clustering penalty near \(+1\), together with an anti-zero
regularizer.  Thus the F-norm-versus-spectral comparison tests the same
objective family on every example.

The SPD problems (\texttt{ex2}, \texttt{ex3}) are initialized from the FSAI
factor.  The indefinite saddle-point problem \texttt{ex5}, where FSAI does not apply, is initialized from the identity. 
The optimizer uses a strong-Wolfe line search, a history size \(20\), and at most \(50\) outer steps, each consisting of up to \(20\) inner iterations. Optimization terminates early when the change in loss between outer steps falls below \(10^{-10}\).

\begin{table}[htbp]
\centering
\caption{Fixed-pattern objective comparison on the \(\mathcal S_2\) support.
The three test matrices are \texttt{ex2} with contrast
\(\lambda_0/\lambda_1=10\), \texttt{ex3} with \(\sigma=8\), and \texttt{ex5}
with \(k=4\). \textit{Fill} is the total
\(\operatorname{nnz}(Z)+\operatorname{nnz}(W)\) for AINV and the single-factor
\(\operatorname{nnz}(G)\) for the other methods; \textit{Iters} is
the mean FGMRES iteration count over the five right-hand sides.}
\label{tab:fixed_loss_ceiling_a2}
\small
\setlength{\tabcolsep}{5pt}
\begin{tabular}{lrrrrrr}
\toprule
 & \multicolumn{2}{c}{\texttt{ex2}} & \multicolumn{2}{c}{\texttt{ex3}} & \multicolumn{2}{c}{\texttt{ex5}} \\
\cmidrule(lr){2-3} \cmidrule(lr){4-5} \cmidrule(lr){6-7}
Method & Fill & Iters & Fill & Iters & Fill & Iters \\
\midrule
NoPrecond        & --- & \texttt{fail} & --- & 195.6 & --- & 178.8 \\
FSAI              &  4630 &  67.8 &  1277 &  71.0 &   --- & --- \\
F-norm            &  4630 &  70.4 &  1277 &  68.6 &  7590 & \texttt{fail} \\
EigSpec    &  4630 &  67.8 &  1277 &  66.6 &  7590 &   6.2 \\
AINV              &  8830 &  99.0 &  2324 & 167.4 & 15180 &  17.4 \\
\bottomrule
\end{tabular}
\end{table}

Table~\ref{tab:fixed_loss_ceiling_a2} reports the \(\mathcal S_2\)-support
comparison.  On the two SPD examples, spectral tuning is comparable to the classical single-factor baselines.  For \texttt{ex2}, FSAI and spectral tuning both require \(67.8\) iterations, while F-norm tuning requires
\(70.4\).  For \texttt{ex3}, spectral gives the lowest count among the
single-factor methods, \(66.6\) iterations, compared with \(68.6\) for F-norm and \(71.0\) for FSAI.  Thus, on these SPD cases, the spectral objective gives performance similar to or slightly better than the Frobenius-based and FSAI
baselines.

The indefinite saddle-point \texttt{ex5} case is qualitatively different.  FSAI is not reported for this indefinite problem, and F-norm tuning fails the true-residual check on the \(\mathcal S_2\) support. Spectral tuning, however, converges in \(6.2\) iterations on the same support with
\(\operatorname{nnz}(G)=7590\).  With the support fixed, the objective used to choose the admissible entries therefore determines whether the resulting
factor is an effective preconditioner for this indefinite problem.

The AINV is included as classical nonsymmetric references at roughly twice the fill of the single-factor methods.  
AINV require more iterations than spectral in this table, despite using larger total fill.  
The difference is especially pronounced on
\texttt{ex5}, where spectral reaches \(6.2\) iterations with a
single-factor fill of \(7590\), while AINV requires
\(17.4\) iterations with a total fill \(15180\).

\begin{figure}[htbp]
\centering
\input{figures/ex5_spectrum_before_after}
\caption{Eigenvalue spectra for the \texttt{ex5} case (\(n=260\),
\(k=4\)) on the \(\mathcal S_2\) support.  Each tick is one eigenvalue of
the corresponding congruent operator \(G^\top A G\); the NoPrecond row shows
the eigenvalues of \(A\).  The F-norm and EicSpec rows use the same
support and differ only in the loss used to choose the factor entries.}
\label{fig:ex5_spectrum_before_after}
\end{figure}

Figure~\ref{fig:ex5_spectrum_before_after} illustrates the issue that motivated the introduction of the bimodal loss.
On the same \(\mathcal S_2\) support, the Frobenius
objective tries to fit the one-sided target
\[
    G^\top A G\approx I .
\]
For an invertible factor, this conflicts with Sylvester inertia, since the
congruent operator must retain the negative inertia of \(A\).  In the
optimized sparse factor, the negative branch is therefore driven toward small
magnitude rather than toward \(+1\), producing near-zero eigenvalues and the
observed stagnation.  The bimodal loss uses the attainable two-sided target
\[
    |\lambda|\approx 1,
\]
giving two clusters near \(\pm1\).

We next test the same conclusion across two mesh sizes, two support patterns, and \(k\in\{1,4,8\}\).  For this grid, we use the scalable matrix-free optimizer for all cases, including the \(n=260\) cases, so that the results are produced by a single optimization protocol.  The \emph{ProjSpec} column in Table~\ref{tab:ex5_formal_grid} is obtained with the detached Rayleigh rule of Section~\ref{sec:projected_ad:rayleigh}, rather than by eigendecomposition.  
The overlap with Table~\ref{tab:fixed_loss_ceiling_a2} at
\(n=260\), \(k=4\), and support \(\mathcal S_2\) gives \(10.0\) iterations
rather than \(6.2\).  This difference is expected: Table~\ref{tab:fixed_loss_ceiling_a2}
uses an accurate eigendecomposition to evaluate the spectral objective and
gradient, whereas Table~\ref{tab:ex5_formal_grid} uses the matrix-free
projected gradient.

\begin{table}[htbp]
\centering
\caption{Comparison on the \texttt{ex5} with varying mesh size, factor support, and \(k\).  Each entry is the mean FGMRES iteration count over the right-hand sides. The F-norm and ProjSpec columns use the prescribed support \(\mathcal S_p\).  \emph{AINV} uses prescribed
two-factor supports.}
\label{tab:ex5_formal_grid}
\scriptsize
\setlength{\tabcolsep}{4pt}
\begin{tabular}{llrrrrr}
\toprule
\(n\) & support & \(k\) & NoPrecond & F-norm & ProjSpec & AINV \\
\midrule
260 & \(\mathcal S_2\) & 1 & 222.4 & fail & 10.0 & 18.0 \\
    & \(\mathcal S_2\) & 4 & 178.8 & fail & 10.0 & 17.4 \\
    & \(\mathcal S_2\) & 8 & 202.8 & fail & 10.4 & 17.6 \\
\cmidrule(lr){2-7}
    & \(\mathcal S_3\) & 1 & 222.4 & fail &  6.0 &  7.8 \\
    & \(\mathcal S_3\) & 4 & 178.8 & fail &  6.0 &  7.8 \\
    & \(\mathcal S_3\) & 8 & 202.8 & fail &  6.0 &  8.0 \\
\midrule
1000 & \(\mathcal S_2\) & 1 & 578.8 & fail & 16.0 & 30.8 \\
     & \(\mathcal S_2\) & 4 & 483.2 & fail & 17.0 & 30.2 \\
     & \(\mathcal S_2\) & 8 & 609.4 & fail & 18.0 & 30.8 \\
\cmidrule(lr){2-7}
     & \(\mathcal S_3\) & 1 & 578.8 & fail & 10.0 & 12.0 \\
     & \(\mathcal S_3\) & 4 & 483.2 & fail & 10.8 & 12.0 \\
     & \(\mathcal S_3\) & 8 & 609.4 & fail & 11.8 & 12.0 \\
\bottomrule
\end{tabular}
\end{table}

Table~\ref{tab:ex5_formal_grid} reports the resulting \texttt{ex5} grid.
Across all twelve cases, the projected spectral factors reduce the
unpreconditioned counts from \(178.8\)--\(609.4\) iterations to
\(6.0\)--\(18.0\).  F-norm tuning fails the true-residual test on every
saddle-point case.  The prescribed-pattern \emph{AINV} baseline uses roughly twice the storage of
the single spectral factor and takes \(7.8\)--\(30.8\) iterations; it requires
more iterations than the projected spectral factor in every cell.

The advantage of the projected spectral factor is largest on the smaller
\(\mathcal S_2\) support, especially on the \(n=1000\) mesh, where projected
spectral takes \(16.0\)--\(18.0\) iterations compared with \(30.2\)--\(30.8\)
for \emph{AINV}.  On the larger \(\mathcal S_3\) support, the gap narrows, but
projected spectral still gives lower iteration counts in every tested case:
\(6.0\) versus \(7.8\)--\(8.0\) on the \(n=260\) mesh and \(10.0\)--\(11.8\)
versus \(12.0\) on the \(n=1000\) mesh.  Increasing the support from
\(\mathcal S_2\) to \(\mathcal S_3\) improves both baselines, but the projected
spectral factor remains competitive or better while using a single-factor
support rather than the two-factor \emph{AINV} support.

\subsection{Backward-rule ablation for the SLQ estimator}
\label{sec:exp:backward_ablation}

We next examine the local gradient behavior of the matrix-free SLQ rule.  The
finite SLQ estimator \eqref{eq:slq_estimator} gives a stochastic approximation
to the spectral loss.  
A direct backward pass differentiates through the entire finite Lanczos process, whereas the detached Rayleigh rule of Section~\ref{sec:projected_ad:rayleigh} holds the Ritz data fixed, returning a gradient on the prescribed support. To asses the accuracy of these gradients, we compare both backward rules with the support restricted reference
\[
    \Pi_{\mathcal S_p}\!\left(
        \frac{2}{n} A G f'(P(G))
    \right),
    \qquad
    P(G)=G^\top A G ,
\]
which is the exact eigendecomposition-based spectral gradient restricted to the active entries of \(G\).

The ablation is performed on the saddle-point \texttt{ex5} family at three
mesh refinements, \(n=260\), \(1000\), and \(3920\).  We evaluate all gradients
at \(G_0=I\) over the 50 log-uniform Darcy coefficients
\(k\in[0.5,16]\).  The two SLQ backward routes use the same stochastic state and Lanczos settings:
\(64\) Rademacher probes, \(m=40\) Lanczos steps, and one reorthogonalization pass.

\begin{table}[htbp]
\centering
\caption{Backward-rule comparison for the SLQ estimator
\eqref{eq:slq_estimator} on \texttt{ex5} at
\(G_0=I\).  \emph{Direct SLQ AD} differentiates through the finite Lanczos
process; \emph{Detached Rayleigh} is the rule of
Section~\ref{sec:projected_ad:rayleigh}.
\(\cos(\nabla,\nabla_{\mathrm{dense}})\) is the cosine of each estimator's gradient with the accurate reference.
Results are reported as mean \(\pm\) standard deviation over the 50 values of $k$; peak MB is the mean peak GPU allocation.}
\label{tab:backward_ablation_logk}
\small
\setlength{\tabcolsep}{4pt}
\begin{tabular}{llllll}
\toprule
\(n\) & support & Method & \(\cos(\nabla,\nabla_{\mathrm{dense}})\) & step (s) & peak MB \\
\midrule
260  & \(\mathcal S_2\) & Direct SLQ AD & \(0.61\pm0.33\) & \(0.230\pm0.014\) & \(162\) \\
     &         & Detached Rayleigh & \(0.95\pm0.15\) & \(0.091\pm0.003\) & \( 62\) \\
     & \(\mathcal S_3\) & Direct SLQ AD & \(0.53\pm0.31\) & \(0.238\pm0.003\) & \(234\) \\
     &         & Detached Rayleigh & \(0.93\pm0.15\) & \(0.096\pm0.003\) & \(124\) \\
\midrule
1000 & \(\mathcal S_2\) & Direct SLQ AD & \(0.74\pm0.22\) & \(0.279\pm0.013\) & \(598\) \\
     &         & Detached Rayleigh & \(0.97\pm0.03\) & \(0.119\pm0.003\) & \(215\) \\
     & \(\mathcal S_3\) & Direct SLQ AD & \(0.63\pm0.21\) & \(0.320\pm0.002\) & \(1020\) \\
     &         & Detached Rayleigh & \(0.93\pm0.05\) & \(0.128\pm0.003\) & \(582\) \\
\midrule
3920 & \(\mathcal S_2\) & Direct SLQ AD & \(0.79\pm0.15\) & \(0.416\pm0.012\) & \(2427\) \\
     &         & Detached Rayleigh & \(0.97\pm0.02\) & \(0.157\pm0.003\) & \(828\) \\
     & \(\mathcal S_3\) & Direct SLQ AD & \(0.67\pm0.16\) & \(0.617\pm0.003\) & \(4437\) \\
     &         & Detached Rayleigh & \(0.93\pm0.03\) & \(0.192\pm0.002\) & \(2584\) \\
\bottomrule
\end{tabular}
\end{table}

Table~\ref{tab:backward_ablation_logk} reports the results.  The detached
Rayleigh rule attains mean gradient cosines between \(0.93\) and \(0.97\)
against the dense support-restricted reference across all refinements and
supports.  Direct SLQ automatic differentiation gives lower mean cosines,
between \(0.53\) and \(0.79\), and has substantially larger variation over the
50 values of \(k\).  The detached rule is also about \(2.3\)--\(3.2\times\)
faster per forward--backward call and reduces peak GPU allocation by factors
of about \(1.7\)--\(2.9\).

The dense reference becomes expensive at the largest refinement.  At
\(n=3920\), one dense forward--backward pass takes \(3.47\) s and allocates
\(1.5\)--\(3.2\) GB of peak GPU memory.  The detached Rayleigh rule takes
\(0.16\)--\(0.19\) s and uses \(0.83\)--\(2.58\) GB on the same inputs.  This
is the first tested refinement where the matrix-free projected gradient is
strictly faster than the dense oracle, with lower or comparable peak memory in
the tested cells.

It is worth emphasizing that the detached rule is not the derivative of the finite SLQ estimator.  Direct
SLQ automatic differentiation differentiates the Krylov basis, Ritz values,
quadrature weights, and the eigendecomposition of \(T_\ell\).  The detached
rule omits these algorithmic derivatives and returns the projected
support-gradient of Section~\ref{sec:projected_ad}.  On these tests, this
projected gradient is biased relative to the finite SLQ derivative but better
aligned with the support-restricted dense spectral gradient we target.  It is
both cheaper and more accurate in this local diagnostic, so we use it in the
optimization experiments below.  This is a local test at \(G_0=I\); it does
not characterize the full optimization trajectory.

\subsection{Graph-network factor model}
\label{sec:exp:factor_model}

We next study whether neural network model can be used as a
parameterized preconditioner for a family of related sparse systems.  
In parameter-dependent PDE discretizations, inverse problems, and multilevel or multifidelity computations, one often needs preconditioners for many matrices
that differ in mesh size, physical parameter, or prescribed factor support.
Optimizing a separate spectral factor for every such matrix would defeat the
purpose of a learned preconditioner.  Instead, the goal is to learn a single
parameterized factor model that maps local matrix and problem features to the
admissible entries of \(G\).

We test this idea on the \texttt{ex5} Darcy family.  The controlled axes are
mesh size, prescribed support pattern, and the Darcy parameter \(k\).  We
separate the experiment along these axes.  The first test uses one set of model
weights across two mesh sizes with fixed \(k\) and fixed support.  The second
uses one set of weights across the two prescribed supports
\(\mathcal S_2\) and \(\mathcal S_3\) with fixed mesh and fixed \(k\).  The
third trains on several values of \(k\) and evaluates interpolation to
held-out parameter values.  Together, these tests ask whether one
parameterized factor model can produce effective fixed-support preconditioners
across the main variations in this parametric matrix family, without
reoptimizing a separate spectral factor for each new case.

Following the graph-network framework of
\cite{battaglia2018relational}, the model takes matrix and problem features as
input and predicts the admissible entries of \(G\) on the prescribed support
\(\mathcal S_p\).  The graph nodes are the degrees of freedom of the
finite-element matrix.  In addition, the model maintains one factor-entry token
for each admissible entry \((i,j)\in\mathcal S_p\).  The scalar outputs are
indexed by these factor-support entries; hence, for a fixed support, the model
parameterizes a sparse factor with exactly the prescribed nonzero pattern.

Each graph-network block maintains node embeddings \(x_i\) and factor-entry
embeddings \(e_{ij}\).  The entry update uses the current \(e_{ij}\) together
with the embeddings of its two endpoint nodes.  The node update aggregates two
types of messages: messages from incident factor-support entries and messages
propagated along the matrix-adjacency graph.  A virtual node enables global communication across the graph.

The final entry predictor outputs a correction to the identity factor rather
than predicting the factor entry from scratch.  For an admissible entry
\((i,j)\in\mathcal S_p\), the predicted value is
\[
    \widehat g_{ij}
    =
    \mathbf{1}_{\{i=j\}}
    + h_{\mathrm{edge}}(e_{ij})
    + \eta\, r^{-1/2} u_i^\top v_j ,
\]
where \(h_{\mathrm{edge}}\) is a local edge head,
\(u_i,v_j\in\mathbb R^r\) are endpoint embeddings computed from the final node
states, \(r=128\), and \(\eta\) is a learned scalar gain.  Thus diagonal
entries are initialized from the identity, while off-diagonal entries start
from zero and are learned as corrections on the prescribed support.

The input features are local sparse-matrix and problem features, not spectral
features.  Node features include spatial coordinates, Fourier coordinate
features, a variable-type indicator, a virtual-node flag, and, when present,
the PDE parameter.  For each factor-support entry
\((i,j)\in\mathcal S_p\), the edge features include \(a_{ij}\), \(a_{ii}\),
\(a_{jj}\), diagonal and block indicators, and sparse matrix-polynomial
features from \(A^2\) and \(A^3\) restricted to \(\mathcal S_p\).  We use
simple scalings, nonzero indicators, and signed-log magnitudes of these scalar
quantities.  In the tests below, the architecture is kept fixed: three
graph-network blocks, hidden width \(256\), low-rank dimension \(128\), Fourier
coordinate features, matrix-polynomial edge features, and a virtual node.

For each case used in supervised training, the target factor is obtained by
matrix-specific spectral optimization on the same prescribed support.  The
graph-network model is then trained with a relative \(L^2\) loss over the
admissible entries of \(G\), using the SOAP optimizer in single precision with
a three-stage learning-rate decay
(\(10^{-3}\!\to\!3\times10^{-4}\!\to\!10^{-4}\) over the first \(30/30/40\%\) of
training); we report the final iterate, without checkpoint selection.  We
evaluate the learned factor by applying it as a preconditioner under the FGMRES
protocol of Section~\ref{sec:exp:setup}.

We first isolate the two structural axes of the factor family.  In the
mesh-size test, one model is trained jointly on the two mesh sizes
\(n=260,1000\), with \(k=4\) and support \(\mathcal S_2\).  In the
support-pattern test, one model is trained jointly on the two supports
\(\mathcal S_2,\mathcal S_3\), with \(n=260\) and \(k=4\).

\begin{table}[htbp]
\centering
\caption{Controlled sharing tests for the graph-network model on
\texttt{ex5}.  Entries are mean FGMRES iteration counts.  \emph{Target}
denotes the matrix-specific spectral target used for supervised training.
Left: one model trained on two mesh sizes with \(k=4\) and support
\(\mathcal S_2\).  Right: one model trained on two supports with
\(n=260\) and \(k=4\).}
\label{tab:gn_controlled_sharing}
\small
\begin{tabular}{@{}lrrr@{\qquad}lrrr@{}}
\toprule
\multicolumn{4}{c}{Mesh-size sharing}
&
\multicolumn{4}{c}{Support-pattern sharing} \\
\cmidrule(lr){1-4}\cmidrule(lr){5-8}
\(n\) & AINV & Target & GN
&
support & AINV & Target & GN \\
\midrule
260  & 17.4 & 10.0 & 10.0
&
\(\mathcal S_2\) & 17.4 & 10.0 & 10.0 \\
1000 & 30.2 & 17.0 & 17.4
&
\(\mathcal S_3\) &  7.8 &  6.0 &  6.0 \\
\bottomrule
\end{tabular}
\end{table}

Table~\ref{tab:gn_controlled_sharing} reports the two controlled sharing
tests.  In the mesh-size test, one graph-network model essentially matches the
matrix-specific spectral target on both refinements: \(10.0\) iterations at
\(n=260\) and \(17.4\) iterations at \(n=1000\), against targets of \(10.0\)
and \(17.0\).  The GN factors also improve substantially on AINV, which takes
\(17.4\) and \(30.2\) iterations on the two meshes, respectively.

In the support-pattern test, one model trained jointly on \(\mathcal S_2\)
and \(\mathcal S_3\) again matches the matrix-specific spectral target.  It
requires \(10.0\) iterations on \(\mathcal S_2\) and \(6.0\) iterations on
\(\mathcal S_3\), compared with \(17.4\) and \(7.8\) iterations for AINV.
This test changes the number and locations of admissible factor entries, so
the output index set is different across the two supports.  The result shows
that the same graph-network architecture and weights can be used across these
prescribed supports.

The remaining axis is the PDE parameter.  For this test, we fix the small
\texttt{ex5} mesh and the \(\mathcal S_2\) support, and train on \(128\)
log-uniform values of \(k\in[3,5]\).  We evaluate on \(32\) randomly chosen
held-out values of \(k\).  The held-out values are not used during
supervised training, and no spectral fine-tuning is applied.  For each
training value of \(k\), the supervised target is the corresponding
matrix-specific spectral factor on \(\mathcal S_2\).  Matrix-specific spectral
targets are also computed on the held-out values only as references for
evaluation.  We use the same graph-network architecture as above and train for
\(1000\) epochs (\(4000\) optimizer steps) with minibatches of \(32\) cases.

On the held-out values of \(k\), the supervised graph-network model requires
\(10.21\) FGMRES iterations on average, compared with \(17.46\) for AINV on the
same prescribed support, and it improves on AINV at every one of the \(32\)
held-out values.  The matrix-specific spectral target requires \(10.0\)
iterations on average.  Thus, on this restricted parameter range, the learned
factor nearly matches the matrix-specific target and clearly improves on AINV,
without any held-out fine-tuning.
\section{Conclusion}
\label{sec:conclusion}

We studied fixed-support factorized sparse approximate inverse
preconditioners whose admissible entries are chosen by spectral objectives of
the congruent operator $P(G)=G^\top A G$.  The main value-selection
contribution is a bimodal loss for symmetric indefinite systems.  This loss
respects inertia by driving eigenvalues away from zero and toward separated
positive and negative clusters.  The experiments show that this spectral
criterion can make a prescribed sparse support effective on saddle-point
problems where Frobenius-based tuning leaves nearly singular preconditioned
operators.

To make these spectral objectives practical, we developed matrix-free
projected Krylov support-gradients.  The SLQ-based rule uses Lanczos Ritz data
to approximate the dense spectral derivative \(n^{-1}f'(P(G))\), and evaluates
the resulting gradient only on the prescribed factor support.  Its detached
Rayleigh implementation avoids differentiating through the Lanczos recurrence:
the Lanczos data define local spectral information, while the backward pass
differentiates only recomputed Rayleigh quotients.  The projected KPM rule
provides a finite-polynomial comparison.

We also introduced a parameterized node-edge factor model for predicting
admissible factor entries across related matrices.  The model uses
entry-indexed states for the factor entries together with auxiliary states on
the original matrix unknowns.  The experiments show that entrywise agreement
with optimized factors is not by itself a sufficient proxy for solver quality,
and that spectral fine-tuning can further reduce Krylov iteration counts on
held-out parameter values.

Future work includes adaptive support selection, multilevel or Schur-aware
factor models for saddle-point systems, stronger generalization tests across
matrix families and mesh resolutions, and extensions of the projected
support-gradient framework to nonsymmetric preconditioned operators.

\bibliographystyle{siamplain}
\bibliography{papers}
\end{document}